\dateline{\today}{\today}{TBD}
\title{Counting Candy Crush Configurations}
\author{Adam Hamilton\\
\small ARC Centre of Excellence for Mathematical and Statistical Frontiers,\\[-0.8ex] \small School of Mathematical Sciences\\[-0.8ex]
\small University of Adelaide\\[-0.8ex] 
\small South Australia, Australia.\\
\small\tt adam.h.hamilton@adelaide.edu.au\\
\and
Giang T. Nguyen\\
\small ARC Centre of Excellence for Mathematical and Statistical Frontiers,\\[-0.8ex] \small School of Mathematical Sciences\\[-0.8ex]
\small University of Adelaide\\[-0.8ex] 
\small South Australia, Australia.\\
\small\tt giang.nguyen@adelaide.edu.au\\
\and
Matthew Roughan\\
\small ARC Centre of Excellence for Mathematical and Statistical Frontiers,\\[-0.8ex] \small School of Mathematical Sciences\\[-0.8ex]
\small University of Adelaide\\[-0.8ex] 
\small South Australia, Australia.\\
\small\tt matthew.roughan@adelaide.edu.au}
\pgfplotsset{compat=1.14}
\begin{document}

\maketitle


\begin{abstract}
A $k$-stable $c$-coloured Candy Crush grid is a weak proper $c$-colouring of a particular type of $k$-uniform hypergraph. In this paper we introduce a fully polynomial randomised approximation scheme (FPRAS) which counts the number of $k$-stable $c$-coloured Candy Crush grids of a given size $(m,n)$ for certain values of $c$ and $k$. We implemented this algorithm on Matlab, and found that in a Candy Crush grid with 7 available colours there are approximately $4.3\times 10^{61}$ 3-stable colourings. (Note that, typical Candy Crush games are played with 6 colours and our FPRAS is not guaranteed to work in expected polynomial time with $k=3$ and $c=6$.) We also discuss the applicability of this FPRAS to the problem of counting the number of weak $c$-colourings of other, more general hypergraphs.
\end{abstract}

\section{Introduction}
This paper solves a particular problem in enumerative combinatorics inspired by the popular mobile-phone game Candy Crush, which is a match-3 puzzle game \cite{CandyCrush}. The objective of the game is to swap coloured pieces of ``candy" on a game board to match three or more of the same colour. Once these candies have been matched, they are eliminated from the board and replaced with new ones, which potentially creates further matches. This chain reaction will continue until there are no more matchings. The longer this chain reaction lasts the more points are scored.\\
\newline\noindent
In this paper, we seek to approximate the size of the \textit{state space} of this particular game, that  is, the number of ways that we can assign colours to the individual candies such that no matchings occur. We formulate this problem in terms of \textit{hypergraphs}, where the individual candies are the vertices and the potential matchings are the hyperedges. We find that approximating the size of the state space of Candy Crush is a special case of the more general problem of estimating the weak chromatic polynomial of a hypergraph \cite{ZhangHypergraph}.\\
\newline\noindent
We describe a randomised algorithm and the conditions (Inequality (\ref{eqnconditions})) for it to be a \textit{fully polynomial randomised approximation scheme} (FPRAS), an algorithm that provides an estimate with arbitrarily small error of the chromatic polynomial in randomised polynomial time. For Candy Crush, the conditions can be summarised as: our algorithm is an FPRAS if the candies can take at least 7 colours, which is slightly larger than the default case in the game (6 Colours).\\
\newline\noindent
This FPRAS is based on the Multilevel Splitting algorithm (MSA) used in rare event analysis \cite{Lagnoux}. The idea behind the Multilevel Splitting algorithm is to simulate a rare event in a large state space by dividing the state space into a sequence of nested sub-spaces each of which are not rare events themselves. This approach has been quite successful in estimating the chromatic polynomials of simple graphs \cite{Slava,ApproximationAlgorithms}. However, there has been no work of which we are aware using Multilevel Splitting algorithms to approximate the weak chromatic polynomials of hypergraphs, or for counting Candy Crush configurations.\\
\newline\noindent
In order for our algorithm to be an FPRAS, we must be able to approximately uniformly sample elements from the set of Candy Crush configurations in randomised polynomial time. Most Multilevel Splitting algorithms use Markov Chain Monte Carlo methods to perform these samplings. These techniques are generally used because it is simple to prove that they provide approximate uniform samplings. In our paper, we use the \textit{Moser-Tardos} algorithm \cite{MoserTardos} to perform uniform samplings. The reason we use this method is because, given the best possible conditions, the Markov Chain Monte Carlo algorithms run in time $O(n^2\log(n))$ \cite{Grid}, whereas the Moser-Tardos algorithm runs in expected linear time \cite{MoserTardos}. Proving that the Moser-Tardos algorithm gives uniformly distributed samples is more challenging than proving uniformity of Markov Chain Monte Carlo. Guo \textit{et al.}~\cite{UniformMT} studied the output distribution of the Moser-Tardos algorithm, and showed that if a constraint satisfaction problem is \textit{extremal} then the output of the Moser-Tardos algorithm applied to said problem will be uniformly distributed. Unfortunately, finding weak proper colourings of hypergraphs is not extremal and we must come up with a different proof that Moser-Tardos sampling from the state space of Candy Crush is uniform. To our knowledge this is the first example of a constraint satisfaction problem that has a proven uniform distribution from the Moser-Tardos algorithm.\\
\newline\noindent
We implemented our estimation algorithm using Matlab. We found, in a standard in-game level of Candy Crush in which there are 9 rows, 9 columns, and 6 available colours that, out of the $6^{81}\approx1.072\time 10^{63}$ possible colourings, approximately $0.040\%$ of these colourings form valid levels in Candy Crush. Although there was no theoretical guarantee that the algorithm should produce an estimate in polynomial time, it did not take Matlab long to produce an estimate. This result is not in and of itself difficult to obtain. For $9\times9$ grids, the probability that a randomly selected 6-colouring forms a weak proper $c$-colouring is high enough for us to use Monte Carlo simulation to obtain an accurate approximation. The significance of our algorithm is that it runs in randomised polynomial time, whereas Monte Carlo simulation runs in exponential time. This allows us to compute results for larger grids.


\section{Problem formulation}
\label{problem formulation}
The game Candy Crush is described in more detail in \cite{CandyCrush}. We define the Candy Crush grid, the lattice in which the game is played, as a hypergraph whose vertices are a subset of a square lattice and whose hyperedges are sets of $k$ vertices that appear consecutively in a single row or column. These hypergraphs are designed to represent individual in-game levels in Candy Crush, which is played on a rectangular array containing different colours of candies. In an actual game of Candy Crush, the number of columns and rows, $n$ and $m$, are between 1 and 9 and $k=3$.\\
\newline\noindent 
More precisely, a Candy Crush grid is a $k$-uniform hypergraph, $H=(V,E)$, parameterised by natural numbers $m, n$, and $k$, where the set of vertices, $V$, of the Candy Crush grid is a subset of $\{1,\cdots,n\}\times\{1,\cdots,m\}$, and the hyperedges, $E$, of the Candy Crush grid are the subsets of $V$, such that each $e\in E$ is of the form $\{(i,j),(i,j+1),\cdots,(i,j+k-1)\}$ or $\{(i,j),(i+1,j),\cdots,(i+k-1,j)\}$. See Figure \ref{fig:hypergraphfig} for an example of a Candy Crush grid and its hypergraph representation. \\
\begin{figure}[h]
    \centering

    \begin{subfigure}[t]{0.35\textwidth}
    \label{hypergraphfig,a}
        \includegraphics[width=0.8\textwidth]{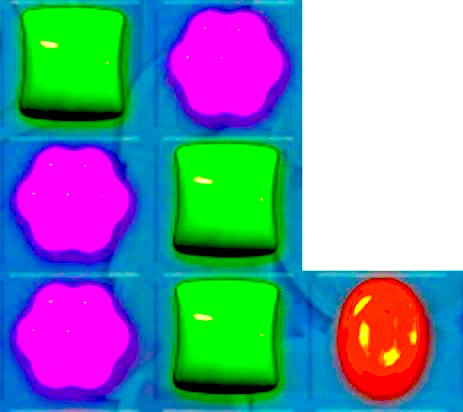}
        \caption{}
        \label{fig:MTiter}
    \end{subfigure}
        ~ 
    \begin{subfigure}[t]{0.5\textwidth}
    \label{hypergraphfig,b}
        \includegraphics[width=0.8\textwidth,trim={4cm 11cm 3cm 10cm},clip]{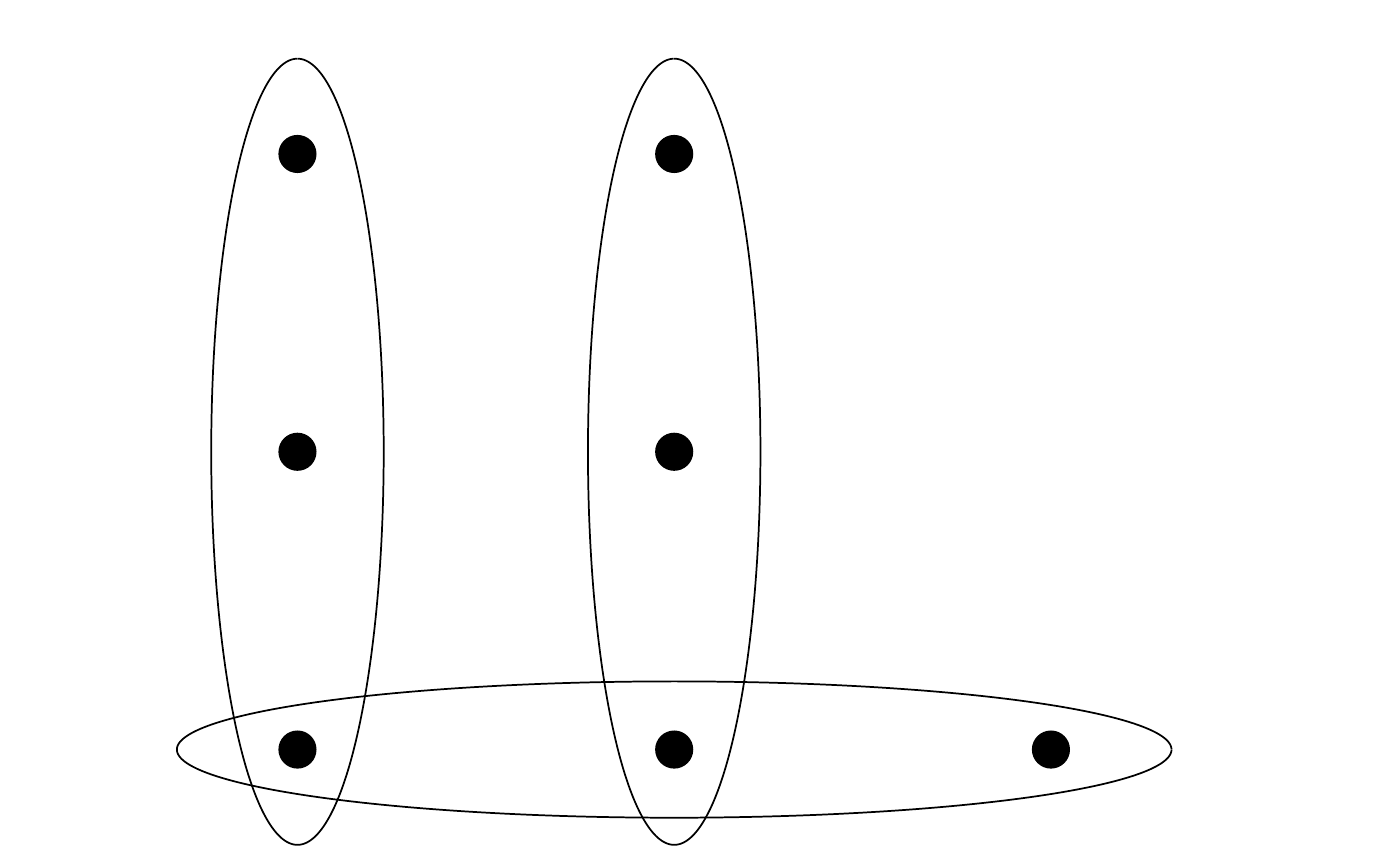}
        \caption{}
        \label{fig:orginalgrid}
    \end{subfigure}
    
    \caption{An example of a Candy Crush grid shown in how it would appear in the game (a), and the corresponding hypergraph (b). }\label{fig:MTvsCC}
\label{fig:hypergraphfig}
\end{figure}

\noindent There are two types of proper hypergraph colouring, they are called the \textit{strong} and \textit{weak} colourings. A \textit{weak proper $c$-colouring} of a hypergraph $H$ is a mapping $\phi:V\rightarrow\{1,..,c\}$ such that $|\{\phi(v):v\in e\}|>1, \forall e\in E$. That is, no hyperedge in $H$ is monochromatic. A \textit{strong proper $c$-colouring} of a hypergraph $H$ is a mapping $\phi:V\rightarrow\{1,..,c\}$ such that $|\{\phi(v):v\in e\}|=|e|, \forall e\in E$. That is, every vertex in each hyperedge of $H$ is assigned a unique colour within the hyperedge. The number of weak proper $c$-colourings of $H$ is given by the function $P(H,c)$. This function is called the \textit{weak chromatic polynomial} and is polynomial in $c$, with degree $|V|$, the number of vertices in the hypergraph \cite{ZhangHypergraph}.\\
\newline\noindent
Weak colourings relate to match-3 games because in a typical match-3 game, the board is filled with various tiles that one must shift, select or rotate to form chains of 3 or more identical elements. For instance, the vertices in the hypergraph correspond to the candies in Candy Crush and the hyperedges correspond to potential matchings. A weak proper $c$-colouring of a Candy Crush grid corresponds to a game of Candy Crush in which no sets of three candies have been matched. In the game if no elements have been matched then the game is called \textit{stable}, because no further changes will happen on the board until the next move from the player. From here on the terms \textit{stable} and \textit{weak proper $c$-colouring} will be used interchangeably.\\

\subsection{The probabilistic set-up}
In a hypergraph there are a total of $c^{|V|}$ possible $c$-colourings. For large enough values of $c$, a non-zero proportion, $\ell$, of these colourings are weak proper $c$-colourings of $H$. In other words, $\ell$ is the probability that a $c$-colouring, selected uniformly at random, is a weak proper $c$-colouring. Finding the value of $\ell$ is equivalent to finding the value of the weak chromatic polynomial $P(H,c)$, since $P(H,c)=\ell c^{|V|}$.\\
\newline\noindent
Exact calculation of $P(H,c)$ is presumed to be intractable. This is because an undirected graph $G$ is an example of a 2-uniform hypergraph and evaluating the chromatic polynomial of a simple graph is known to be $\#P$-hard \cite{ApproximationAlgorithms}. Thus the problem of evaluating $P(H,c)$ contains the problem of evaluating $P(G,c)$ as a sub-problem, and is therefore also $\#P$-hard. There are no known poly-time deterministic algorithms that can solve such problems. We are operating under the assumption that calculating $P(H,c)$ is still in $\#P$-hard even when $H$ is restricted to Candy Crush grids.\\
\newline\noindent
While exact computation of $P(H,c)$ may be intractable, one could still consider estimating $P(H,c)$. The simplest way to estimate $\ell$ (and thus $P(H,c)$) would be to use Monte Carlo simulation and is summarised in Algorithm \ref{alg:Monte-Carlo}:\\

\begin{algorithm}[H]
\label{alg:Monte-Carlo}
 \KwData{A hypergraph $H=(V,E)$, a natural number $c$, and a number of iterations $N$}

 \KwResult{An unbiased estimator of $\ell$}
 set $count=0$

 \For{$i=1,...,N$}{
 Uniformly sample a colouring of $H$ by uniformly assigning every vertex $v\in V$ a value from the set $\{1,..,c\}$\\
 \If{\text{There is no monochromatic hyperedge $e\in E$}}{$count=count+1$}
 
}
\textbf{return}: $\hat{\ell}_{mc}=count/N$, which is an unbiased estimator of $\ell$.
\vspace{0.5cm}
 \caption{The Multilevel Splitting algorithm used to estimate $\ell$. The number of samples in each level, $T_{\text{sample}}$, is determined in Section \ref{Computational Complexity} as a function of the inputs $|V|$, $\varepsilon$, and $\delta$.}
\end{algorithm}

\subsection{Rare-event Monte Carlo}
\label{rareevent}
 The while loop in Algorithm \ref{alg:Monte-Carlo} has an expected exponential number of iterations $N$ in order to produce a single sample. This means that producing estimates using Monte Carlo simulation would have a similar complexity. Using Monte Carlo simulation will give us an unbiased estimator of $\ell$, called $\hat{\ell}_{MC}$, which is a binomially distributed random variable with mean $\ell$ and variance $\ell(1-\ell)/N$. Consider the relative error of $\hat{\ell}_{MC}$, defined by 

\begin{equation}
\mbox{RE}(\hat{\ell}_{MC})=\frac{\sqrt{var(\hat{\ell}_{MC})}}{\ell}=\frac{\sqrt{(1-\ell)}}{\sqrt{\ell N}},
\label{RelativeError}
\end{equation}

\noindent for fixed $c$. Since $P(H,c)$ is of degree $|V|$, the value of $P(H,c)$ can be asymptotically bounded above and below by two exponential functions. Thus, in order to ensure that $\mbox{RE}(\hat{\ell}_{MC})$ is bounded above, the number of samples $N$ must grow exponentially with the number of vertices $V$ in $H$. Consequently, even for moderate-sized hypergraphs Algorithm \ref{alg:Monte-Carlo} will be unable to yield a useful estimate. In order to properly estimate $P(H,c)$ we need another approach. 

\section{The Algorithm}
\label{algorithms}
\subsection{FPRAS}
\label{def:FPRAS}
In general, counting problems involve computing some function $f:\Sigma^*\rightarrow\mathbb{N}$, where $\Sigma$ is some finite alphabet used to encode a problem and $\Sigma^*$ is the set of finite sequences whose elements are in $\Sigma$. In our case, the problem of counting the number of $k$-stable $c$-colourings of an $m \times n$ Candy Crush grid can be encoded by the triple $(p_{m,n},k,c)$, where $m,n,c$ and $k$ are natural numbers, $p_{m,n}\in V$, the set of vertices in the Candy Crush grid, and $f$ is the weak chromatic polynomial $P(H,c)$.\\
\newline\noindent
A \textit{fully polynomial randomised approximation scheme} (FPRAS) is a randomised algorithm such that, given an input $x\in\Sigma^*$, an error parameter $\varepsilon>0$, and a confidence parameter $0<\delta<1$, we can, in bounded time polynomial in $|x|$, $\varepsilon^{-1}$ and $\log(1/\delta)$, compute an estimate $\hat{f}(x)$ where
$$
 \text{Pr}\bigg\{(1-\varepsilon)f(x)\leq \hat{f}(x) \leq(1+\varepsilon)f(x)\bigg\}\geq 1-\delta .
$$
The algorithm we use to estimate $P(H,c)$ is not strictly an FPRAS, because it runs in probabilistic polynomial time instead of deterministic polynomial time. This means there is a probability that the algorithm will run in super-polynomial time, but this probability is presumed to be negligible based on empirical results from Catarata \textit{et al.}~\cite{Catarata}.

\subsection{Multilevel Splitting algorithm}
\label{level selection}
Here we estimate $P(H,c)$ in randomised polynomial time using a \textit{Multilevel Splitting algorithm}. First we impose a lexicographic ordering on the vertices in the Candy Crush grid, to make the algorithm easier to describe. We say vertex $x=(i,j)$ is lexicographically higher than $x'=(i',j')$ iff $i>i'$, or both $i=i'$ and $j>j'$.\\
\newline\noindent
The Multilevel Splitting algorithm is used to estimate the probability of a rare event by splitting the event up into a sequence of nested sub-events, none of which are rare themselves (see Algorithm \ref{alg:MSA}).\\

\begin{algorithm}[H]
\label{alg:MSA}
 \KwData{A set $Y$, a sequence of subsets $Y=Y_0\supseteq Y_1 \supseteq .... \supseteq Y_T=Y^* $, and a fitness function $S:Y\rightarrow \mathbb{R}$ such that $S(y_i)\geq S(y_{i+1})$ for all $y_i \in Y_i$ and $y_{i+1}\in Y_{i+1}$}

 \KwResult{An unbiased estimator $p$ of $|Y^*|/|Y|$ }
 \For{$t=1,\cdots,T$}{
  calculate an estimator $\hat{c}_t$ for the conditional probabilities $c_t=\text{Pr}(S(Y)\leq S(Y_t)|S(Y)\leq S(Y_{t-1}))$, the probability that a randomly selected member of $Y_t$ is a member of $Y_{t+1}$.\

 }
\textbf{return}: 
$$
\hat{\ell}= \prod_{t=1}^{T}\hat{c}_t.
$$
 \caption{The general Multilevel Splitting algorithm.}
\end{algorithm} 
\vspace{0.5cm}
\noindent Recall that $\ell$ is the probability that a random colouring of a Candy Crush grid $H=(V,E)$ is a weak proper $c$-colouring. We estimate $\ell$ by letting $Y$ be $\{1,...,c\}^{|V|}$, the set of all possible colourings, $Y^*$ be the set of weak proper $c$-colourings of $H$, and the intermediate subsets $Y_t$ be the set of colourings in which the first $t$ vertices do not belong to any monochromatic hyperedges. Thus, the number of levels in the algorithm, $T$, is equal to the number of vertices in the Candy Crush grid, $|V|$, and is bounded above by $mn$.

\subsection{Moser-Tardos algorithm }
\label{Moser-Tardos algorithm}
For the Multilevel Splitting algorithm to perform as an FPRAS, we must be able to calculate the estimators $\hat{c}_t$ in polynomial time. In this paper, we use Monte Carlo estimation to perform the sampling used to calculate $\hat{c}_t$. This means that we need to sample from the set $Y_t$ in polynomial time.\\
\newline\noindent
The most straightforward means to sample from $Y_t$ would be to use Markov Chain Monte Carlo methods, such as \textit{heat-bath dynamics} \cite{Grid} or \textit{Glauber dynamics} \cite{Grid}. These methods have been proven to work in the case that $c\geq6$, producing approximately uniform samples in time $O(|V|^2\log(|V|))$ \cite{Grid}. In this paper, we use a Las Vegas algorithm called the Moser-Tardos algorithm \cite{MoserTardos}. This algorithm samples from $Y_t$ in randomised time, $T_{\text{random}}$, where $\mathbb{E}(T_{\text{random}})\leq O(|V|)$ under certain conditions. It is worth noting that the expected run-time of the Moser-Tardos algorithm is linear in $|E|$, the number of hyperedges, but since in a Candy Crush grid $|E|=O(|V|)$, we can claim that $\mathbb{E}(T_{\text{random}})=O(|V|)$.\\
\newline\noindent
The crucial difference between Monte Carlo and Las Vegas algorithms is as follows. In this context, a Monte Carlo algorithm would take an element of the set of colourings in which the first $t$ vertices do not belong to any monochromatic hyperedges $Y_t$, perform a random walk over this set and, after a fixed number of iterations, stop at an element of $Y_t$ which is distributed arbitrarily close to the uniform distribution. A Las Vegas algorithm, on the other hand, would perform a random walk over the elements of $Y$ and, after a randomly distributed number of iterations, stop at an element of $Y_t$ which is exactly uniformly distributed. How this random number of iterations is determined depends on the choice of algorithm and choice of input.\\
\newline\noindent
In general, the Moser-Tardos algorithm is specified over a set of mutually independent random variables and a set of events which are Boolean functions on the values of these random variables. When using the Moser-Tardos algorithm to solve constraint satisfaction problems, there is some degree of freedom as to how these sets are defined. In Section \ref{uniformity proof}, we show that the choice of variables has a non-trivial effect on the efficiency and output-distribution of the algorithm. Algorithm \ref{alg:Moser-Tardos} describes the Moser-Tardos algorithm used to find a weak proper $c$-colouring $y\in Y$ in the hypergraph $H=(V,E)$.\\

\begin{algorithm}[H]
\label{alg:Moser-Tardos}
 \KwData{A hypergraph $H=(V,E)$ and a natural number $c$}

 \KwResult{A weak proper $c$-colouring of $H$ }
 Randomly uniformly assign every vertex $v\in V$ a value from $\{1,\cdots,c\}$\\
 \While{There is some monochromatic $e\in E$}{
 Randomly uniformly assign every vertex $v\in e$ a value from $\{1,\cdots,c\}$\\
 }
\textbf{return }: The assignment of elements of $\{1,\cdots,c\}$ to elements of $V$

 \caption{The Moser-Tardos algorithm used to find a weak proper $c$-colouring of a hypergraph $H$.}
\end{algorithm} 
\vspace{0.5cm}
\noindent Algorithm \ref{alg:Moser-Tardos} samples weak proper $c$-colourings of a Candy Crush grid in linear expected time only if $c$ and $k$ obey certain constraints. Otherwise, the algorithm, on average, creates more monochromatic hyperedges than it removes in any iteration. Let $W$ be the set of points $(c,k)\in\mathbb{N}^2$ such that Algorithm \ref{alg:Moser-Tardos} samples a $k$-stable $c$-colouring of a Candy Crush of arbitrary size in expected superlinear time. Let  $\partial W$ be the boundary of this set, this boundary is called the \textit{metastable equilibrium} \cite{Catarata}. Currently, finding an analytical expression for the metastable equilibrium is an open problem. In Section \ref{complexity} we define $W'$, a subset of $\mathbb{N}^2$ such that $W\subseteq{W'}$. This is important because it provides us with a sufficient but non-necessary condition for Algorithm 3 to work as an FPRAS. \\

\begin{algorithm}[H]
\label{alg:MSAforCC}
 \KwData{A Candy Crush grid $H=(V,E)$, with parameter $k$, natural number $c>2$, an arbitrary constant $\lambda$, an error parameter $\varepsilon>0$, a confidence parameter $0<\delta<1$, and a lexicographic ordering of $V$}

 \KwResult{An unbiased estimator $\hat{\ell}$ of $P(H,c)/c^{|V|}$ }
 \For{$t=1,...,|V|$}{
 set $count_t=0$\\
 \For{$i=1,...,T_{\text{sample}}$}{
 Uniformly sample an element $y_i$ of $Y_t$:
 \begin{itemize}
     \item  Sample a weak proper $c$-colouring of $H_t=(\{v_1,v_2,...,v_t\},E')$, the sub-hypergraph of $H$ restricted to the first $t$ vertices of $V$ and with all possible edges. This sampling can be done using Algorithm \ref{alg:Moser-Tardos}.
     \item Assign colours from $\{1,\cdots,c\}$ to the other $(|V|-t)$ vertices of $H$ uniformly at random.
 \end{itemize}
 \If{$y_i\in Y_{t+1}$}{$count_t=count_t+1$}
 
 }
  calculate an estimator $\hat{c}_t$ for the conditional probabilities $\hat{c}_t=count_t/T_{\text{sample}}$ \;

 }
\textbf{return}: 
$$
\hat{\ell}= \prod_{t=1}^{T}\hat{c}_t.
$$
 \caption{The Multilevel Splitting algorithm used to estimate $\ell$. The number of samples in each level, $T_{\text{sample}}$, is determined in Section \ref{Computational Complexity} as a function of the inputs $|V|$, $\varepsilon$, and $\delta$.}
\end{algorithm}

\section{Computational Complexity}
\label{Computational Complexity}
We put the pieces together in Algorithm \ref{alg:MSAforCC}, which is an FPRAS for estimating the weak chromatic polynomial of certain Candy Crush grids, and is the main result of this paper. We first demonstrate that, assuming sampling can be done in polynomial time, Algorithm \ref{alg:MSAforCC} satisfies the definition of FPRAS given in \cite{ApproximationAlgorithms}. Let $\varepsilon$ be the error parameter of the FPRAS and let $\delta$ be the confidence parameter. For $t=1,2,...,|V|$, where $|V|$ is the total number of levels in the MSA, our aim is to estimate $c_t$ within a factor of $(1\pm \varepsilon/3|V|)$ with probability $\geq1-\delta$, since $(1\pm\varepsilon/3|V|)^{|V|}\geq(1+\varepsilon)$.\\
\newline\noindent
In order to prove that Algorithm \ref{alg:MSAforCC} is an FPRAS, we need to establish the following lemmas. First we establish crude upper and lower bounds for $c_t$. These are necessary in order to show that $c_t$ can be estimated to a desired level of accuracy with a number of samples that is independent of the size of the Candy Crush grid.\\

\begin{lemma}\label{Bounding Levels}
There exist real numbers $a=1/2$ and $b=1-(c-2)/(c^{k-1})$ such that $a\leq c_t \leq b$ for all $t \in \{1,2,\cdots,|V|-k\}$ and for all $|V|\in \mathbb{N}$.
\end{lemma}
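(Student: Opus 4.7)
My plan is to reduce the computation of $c_t = |Y_t|/|Y_{t-1}|$ to a purely local calculation at the newly added vertex $v_t$. Because the event defining $Y_{t-1}$ only constrains hyperedges fully contained in $\{v_1,\dots,v_{t-1}\}$, under the uniform distribution on $Y_{t-1}$ the color of $v_t$ is uniform on $\{1,\dots,c\}$ and independent of the colors of $v_1,\dots,v_{t-1}$ conditional on those colors being valid. Consequently
\[
c_t \;=\; \mathbb{E}_{\sigma}\!\left[\Pr_{X\sim\mathrm{Unif}\{1,\dots,c\}}\!\bigl(X\text{ is a safe color for } v_t \mid \sigma\bigr)\right],
\]
where $\sigma$ is a uniformly random valid coloring of $v_1,\dots,v_{t-1}$ and ``safe'' means that neither of the at most two hyperedges ending at $v_t$ (one horizontal, $\{(i,j-k+1),\dots,(i,j)\}$, and one vertical, $\{(i-k+1,j),\dots,(i,j)\}$) becomes monochromatic after assigning color $X$ to $v_t$.

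For the lower bound $c_t\ge 1/2$, I would argue pointwise in $\sigma$. Each of the (at most two) hyperedges ending at $v_t$ forbids at most one color for $v_t$, namely the common color of its other $k-1$ vertices when those happen to coincide. Hence at least $c-2$ colors are safe, giving $\Pr(v_t\text{ safe}\mid\sigma)\ge (c-2)/c$. Taking expectations yields $c_t\ge (c-2)/c$; in the regime of interest ($c\ge 4$, and certainly $c\ge 7$ as required by the FPRAS conditions discussed earlier) this gives the claimed $c_t\ge 1/2$.

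For the upper bound $c_t\le 1-(c-2)/c^{k-1}$, I would pick one hyperedge ending at $v_t$, say $e_1=\{(i,j-k+1),\dots,(i,j)\}$, and lower-bound the probability of failure by
\[
1-c_t\;\ge\; \Pr_\sigma\!\bigl(\text{the $k-1$ predecessors of $v_t$ in $e_1$ are monochromatic}\bigr)\cdot \frac{1}{c},
\]
since conditional on those predecessors sharing a color, $v_t$ completes a monochromatic hyperedge with probability exactly $1/c$. The task is then to show that the first factor is at least $(c-2)/c^{k-2}$. For this I would use a direct counting argument: the unconditioned probability that $k-1$ independent uniform colors coincide is $1/c^{k-2}$, and I would show via inclusion–exclusion that the $Y_{t-1}$-conditioning (which only forbids monochromatic hyperedges in the proper sub-hypergraph) deflates this probability by a factor of at most $c/(c-2)$, by comparing the number of valid colorings in which the $k-1$ predecessors are monochromatic to the total $|Y_{t-1}^{\text{first}}|$.

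The main obstacle is the counting in the upper-bound step: the constraints imposed by $Y_{t-1}$ involve all hyperedges fully inside $\{v_1,\dots,v_{t-1}\}$, and several of these overlap the $k-1$ predecessors of $e_1$ nontrivially. Handling this uniformly in $t$ (and handling the boundary cases where $v_t$ has only one incident preceding hyperedge, rather than two) is where the delicate inclusion–exclusion must be carried out; the lower bound, by contrast, is essentially immediate from the bounded-degree structure of Candy Crush hyperedges at any single vertex.
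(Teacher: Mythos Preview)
Your approach diverges from the paper's in both halves of the lemma, and in one place leaves a real gap.

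\textbf{Lower bound.} The paper does \emph{not} count forbidden colours at $v_t$. Instead it builds an explicit injection $Y_t\setminus Y_{t+1}\hookrightarrow Y_{t+1}$: given a colouring whose first monochromatic hyperedge starts at $v_{t+1}$, take the lexicographically highest vertex of that hyperedge (or of each of the at most two such hyperedges) and shift its colour by $+1\pmod c$. This destroys the monochromatic hyperedge and is invertible on its image, so $|Y_{t+1}|\ge |Y_t\setminus Y_{t+1}|$ and $c_t\ge 1/2$ for \emph{every} $c\ge 2$. Your argument gives only $c_t\ge (c-2)/c$, which matches the stated constant $a=1/2$ only when $c\ge 4$; you acknowledge this, and for the FPRAS application it suffices, but it does not prove the lemma as written. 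The injection trick is worth knowing: it sidesteps any analysis of the conditional law of the predecessors.

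\textbf{Upper bound.} Here there is a genuine gap in your plan, and the paper avoids it by a different choice. You work with hyperedges \emph{ending} at the new vertex, so the other $k-1$ vertices lie in $\{v_1,\dots,v_{t-1}\}$ and are fully constrained by $Y_{t-1}$; bounding the probability that they are monochromatic then requires the ``delicate inclusion--exclusion'' you flag but do not carry out, and getting the clean factor $(c-2)/c^{k-2}$ uniformly in $t$ from that route is not straightforward. The paper instead adopts the definition of $Y_t$ stated in the text (``the first $t$ vertices are not in any monochromatic hyperedge''), under which the bad event at step $t{+}1$ is that $v_{t+1}$ is the \emph{lowest} vertex of a monochromatic hyperedge, i.e.\ the hyperedge runs \emph{forward} into vertices of index $>t$. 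The paper then lower-bounds $|Y_t\setminus Y_{t+1}|/|Y_t|$ by the probability that $v_{t+1}=(i,j)$ differs in colour from both $(i-1,j)$ and $(i,j-1)$ (at most two forbidden colours, hence a factor $c-2$ over $c$) and that the $k-1$ vertices to its right all match it (factor $c^{-(k-1)}$ up to the normalisation), yielding $(c-2)/c^{k-1}$. The point is that by orienting the hyperedge forward, the $k-1$ matching vertices are essentially unconstrained and the computation collapses; your backward orientation forces you through the conditional distribution of already-constrained vertices, which is exactly the obstacle you could not close. If you want to keep the sub-hypergraph reading of $Y_t$ implicit in Algorithm~4, you will need a separate argument; the paper's proof does not supply one.
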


\begin{proof}
Recall that $Y_t$ is the set of Candy Crush grids in which the first $t$ candies do not belong to any monochromatic $k$-uniform hyperedges. By definition $c_t=|Y_{t+1}|/|Y_{t}|$. Form this it can be shown that $c_t=1-|Y_t\setminus Y_{t+1}|/|Y_{t}|$. It is helpful to consider the set $Y_t\setminus Y_{t+1}$, the set of all colourings in which the first monochromatic hyperedge begins with the vertex at position $t+1$.\\
\newline\noindent
In our paper the coordinates $(i,j)$ refer to points with integer coordinates in the upper right quadrant of the Cartesian plane. In order to find a lower bound for $c_t$ we construct a bijective function between $Y_t\setminus Y_{t+1}$ and a subset of $Y_{t+1}$. This implies that $|Y_{t+1}|\geq |Y_t\setminus Y_{t+1}|$ and therefore that $c_t\geq 1/2$. This function takes an element of $Y_{t+1}$ and creates a monochromatic hyperedge that begins with the lexicographically lowest vertex in a monochromatic $k$-mer in $Y_t\setminus Y_{t+1}$.\\
\newline\noindent
One bijective function $f:Y_t\setminus Y_{t+1} \rightarrow Y_{t+1}$ that works is the function defined by the following process:
\begin{enumerate}
    \item Take an element $G'$ of $Y_t\setminus Y_{t+1}$, then $G'$ must have either one or two monochromatic hyperedges. If $G'$ has only one monochromatic hyperedge call this $e$. If $G'$ has two monochromatic hyperedges, call these $e_1$ and $e_2$.
    \item Take the lexicographically highest vertex, $x$, in $e$ (if $G'$ has 2 monochromatic hyperedges then take the 2 lexicographically highest vertices of both hyperedges, $e_1$ and $e_2$, call then $x_1$ and $x_2$) this/these vertices will have some colour $\ell\in \{1,\cdots,c\}$.
    \item $f(G')$ returns $G'$ with one modification: it changes the colour of the vertex $x$ (or $x_1$ and $x_2$) from  $\ell$ to $\ell+1 (\text{mod } c)$. Note that this modified $G'$ is in $Y_{t+1}$. 
\end{enumerate}

\noindent It is clear to see that if we consider only the subset of $Y_{t+1}$ that is the image of $Y_t\setminus Y_{t+1}$ under $f$, then $f$ is bijective, since $f$ consists of a single addition modulo $c$.\\
\newline\noindent
In order to find an upper bound for $c_t$, we need to find an upper bound for $|Y_{t+1}|/|Y_{t}|$, or a lower bound for $|Y_t\setminus Y_{t+1}|$. If we consider an element of $Y_t\setminus Y_{t+1}$, the lexicographically first monochromatic hyperedge is either vertical or horizontal (w.l.o.g. we assume horizontal). Let $x$ be the lexicographically first vertex in a monochromatic hyperedge, with coordinates $(i,j)$, then a sufficient but non-necessary condition for $x$ to be the lexicographically lowest candy of the lexicographically lowest monochromatic hyperedge is it cannot be the same colour as the vertices with coordinates $(i-1,j)$ and $(i,j-1)$. It should be noted that it is possible for $x$ to be the lexicographically lowest vertex in the lexicographically lowest monochromatic hyperedge if $x$ is the same colour as $(i-1,j)$ and $(i,j-1)$ or both. But we are interested in finding a lower bound for $|Y_t\setminus Y_{t+1}|$. For this reason, we are deliberately under counting the elements of $|Y_t\setminus Y_{t+1}|$ and only considering the conditions in which $x$ is guaranteed to be the lexicographical lowest candy in the lexicographically lowest monochromatic hyperedge. Therefore, in this restricted setting, $x$ can take on $c-2$ possible colours. Since $x$ is an element of $Y_t\setminus Y_{t+1}$ and, in the interests of deriving a lower bound, we are currently considering horizontal monochromatic hyperedges, then the $k-1$ vertices to the right of $x$ must all be the same colour as $x$. This gives a lower bound for $|Y_t\setminus Y_{t+1}|$ of $|Y_t|(c-2)/c^{k-1}$, the probability that $x$ is the lexicographically lowest candy in a horizontal monochromatic hyperedge that can take on $c-2$ colours.\\
\newline\noindent
Setting $a=1/2$ and $b=1-(c-2)/(c^{k-1})$, this completes the proof of Lemma \ref{Bounding Levels}.
\end{proof}

\begin{theorem}\label{thm:FPRAS}
Assuming that we can sample uniformly from all the sets $Y_t$ in polynomial time, Algorithm \ref{alg:MSAforCC} is an FPRAS for approximating the number of $k$-stable Candy-Crush grids.
\end{theorem}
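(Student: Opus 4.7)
The plan is a standard product-estimator analysis: estimate each conditional probability $c_t$ to within a tight multiplicative tolerance with high confidence, take a union bound across the $|V|$ levels, and then use the bounds from Lemma~\ref{Bounding Levels} to argue that the per-level sample size $T_{\text{sample}}$ need only grow polynomially in $|V|$, $\varepsilon^{-1}$, and $\log(1/\delta)$.

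First I would choose target per-level tolerance $\varepsilon' = \varepsilon/(3|V|)$ and per-level failure probability $\delta' = \delta/|V|$. Observe that $(1-\varepsilon')^{|V|} \geq 1-|V|\varepsilon' \geq 1-\varepsilon/3$ and, via the standard inequality $(1+x)^n \leq \exp(nx)$, $(1+\varepsilon')^{|V|} \leq \exp(\varepsilon/3) \leq 1+\varepsilon$ for $\varepsilon$ small enough (handled by taking a slightly smaller constant in place of $3$ if necessary). Hence, if the event $|\hat{c}_t - c_t| \leq \varepsilon' c_t$ holds for every $t$, then $\hat{\ell}=\prod_t \hat{c}_t$ satisfies $(1-\varepsilon)\ell \leq \hat{\ell} \leq (1+\varepsilon)\ell$. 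A union bound over the $|V|$ levels shows that this joint event has probability at least $1-\delta$ provided each individual level succeeds with probability at least $1-\delta'$.

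Next I would apply a standard Chernoff bound to each $\hat{c}_t$, which is (by the uniform-sampling assumption) the empirical mean of $T_{\text{sample}}$ independent Bernoulli$(c_t)$ trials. The multiplicative Chernoff bound gives
\begin{equation*}
\Pr\bigl\{|\hat{c}_t - c_t| > \varepsilon' c_t\bigr\} \leq 2\exp\!\bigl(-\tfrac{1}{3}\varepsilon'^{\,2}\, c_t\, T_{\text{sample}}\bigr).
\end{equation*}
Lemma~\ref{Bounding Levels} supplies $c_t \geq a = 1/2$, which is the crucial ingredient: it makes the Chernoff exponent bounded below independently of the instance. Setting this bound to be at most $\delta'$ and solving yields
\begin{equation*}
T_{\text{sample}} \;=\; \left\lceil \frac{6}{\varepsilon'^{\,2}} \log\!\Bigl(\frac{2}{\delta'}\Bigr) \right\rceil \;=\; O\!\left( \frac{|V|^2}{\varepsilon^2} \log\!\Bigl(\frac{|V|}{\delta}\Bigr) \right),
\end{equation*}
which is polynomial in $|V|$, $\varepsilon^{-1}$ and $\log(1/\delta)$, as required by the definition in Section~\ref{def:FPRAS}.

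Finally I would assemble the running time: Algorithm~\ref{alg:MSAforCC} performs $|V|$ outer iterations, each drawing $T_{\text{sample}}$ samples from $Y_t$ and checking membership in $Y_{t+1}$. Each membership test takes $O(|V|)$ time, and by the standing assumption each sample is produced in poly$(|V|)$ time, so the total running time is $|V| \cdot T_{\text{sample}} \cdot \text{poly}(|V|)$, still polynomial in $|V|$, $\varepsilon^{-1}$ and $\log(1/\delta)$. Combined with the accuracy analysis above, this is exactly the FPRAS property. The main obstacle is essentially bookkeeping: making sure the chosen $\varepsilon'$ and $\delta'$ really do compose through the product and union bound to give $(\varepsilon,\delta)$ in the end, which is why the lower bound $c_t \geq 1/2$ from Lemma~\ref{Bounding Levels} is doing the heavy lifting here — without a constant lower bound on $c_t$, the Chernoff exponent would degrade and $T_{\text{sample}}$ would no longer be polynomial.
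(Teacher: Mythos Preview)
Your proposal is correct and follows essentially the same approach as the paper: a product-estimator analysis with per-level tolerance $\varepsilon/(3|V|)$ and per-level failure probability $\delta/|V|$, a multiplicative Chernoff bound combined with the lower bound $c_t\geq 1/2$ from Lemma~\ref{Bounding Levels}, and a union bound across levels. Your derived sample size $T_{\text{sample}}=\lceil 6\varepsilon'^{-2}\log(2/\delta')\rceil$ even yields the same constant $54$ that the paper obtains, and your handling of the $(1\pm\varepsilon')^{|V|}$ composition is in fact slightly more careful than the paper's.
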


\begin{proof}

\noindent The bounds for $c_t$ from Lemma \ref{Bounding Levels}, imply that $c_t$ does not correspond to a rare event for any values of $t$, that is, $c_t$ does not decrease super polynomially towards zero as $t$ increases. This implies that the number of samples needed to estimate $c_t$ to a specified degree of accuracy remains constant regardless of the size of the Candy-Crush grid and the value of $t$. We use Chernoff's inequality to properly estimate the number of necessary samples, $T_{\text{sample}}$, from the sets $Y_t$.\\ 
\newline\noindent
Let $T_{\text{sample}}$ denote the number of samples used to calculate the estimator $\hat{c}_{t}$. From Section \ref{rareevent} we note that $\hat{c}_t$ is unbiased, \textit{i.e}, $\mathbb{E}[\hat{c}_t]=c_t$. Our aim is to estimate $c_t$ within a factor of $(1\pm \varepsilon/3|V|)$ with probability $\geq1-\delta/|V|$. Using Chernoff's inequality, we show that if $T_{\text{sample}}=54((|V|/\varepsilon)^2\log{(2|V|/\delta)}$ we can estimate $c_t$ with the desired levels of accuracy and confidence. For the sake of brevity $T$ and $T_{\text{sample}}$ will be used interchangeably. To find a workable value for $T$ we need to satisfy the inequality,

$$
\text{Pr}\big\{|T\hat{c}_t-Tc_t|>Tc_t\varepsilon/3|V|\big\}\leq 2e^{-(\varepsilon/3|V|)^2Tc_t/3}=\delta/|V|.
$$
So we take the equation 
$$
2e^{-(\varepsilon/3|V|)^2Tc_t/3}=\delta/|V|,
$$
and rearrange to obtain
$$
Tc_t=27(|V|/\varepsilon)^2\log(2|V|/\delta).
$$
From Lemma \ref{Bounding Levels}, we know that $1/2\leq c_t$, and so an upper bound for $T$ that satisfies  $\text{Pr}\big\{|T\hat{c}_t-Tc_t|>Tc_t\varepsilon/3|V|\big\}\leq \delta/|V|$ is $T=54(|V|/\varepsilon)^2\log(2|V|/\delta)$. Using this value of $T$ we have

$$
\text{Pr}\big\{|\hat{c}_t-c_t|>c_t\varepsilon/3|V|\big\}<\delta/|V|.
$$
From this, we can conclude with probability $\geq 1-\delta$, for all $t$, that we have
$$
c_t(1-\varepsilon/3|V|)\leq \hat{c}_t\leq c_t(1+\varepsilon/3|V|).
$$
Let $\hat{\ell}=\prod_{t=1}^{|V|}\hat{c}_t$. Since $(1\pm\varepsilon/3|V|)^{|V|}\geq(1\pm\varepsilon)$, we have
$$
\text{Pr}\big\{(1-\varepsilon)\ell\leq\hat{\ell}\leq(1+\varepsilon)\ell\big\}\geq 1-\delta.
$$

\noindent The last step is prove that this algorithm runs in time polynomial in $|V|$, $\varepsilon^{-1}$, and $\log(\delta^{-1})$. There are $|V|$ levels, $O((|V|/\varepsilon)^2\log{(2|V|/\delta)}$ samples at each level, and since it takes $O(f(m,n,c,k))$ time to perform each sample, where $f$, an upper bound to the number of basic operations required to sample from $Y_t$, is a function still to be determined. An upper bound for the time complexity of this FPRAS is $O(mn\times ((mn/\varepsilon)^2\log{(2mn/\delta))}\times f(m,n,c,k))$, which, assuming $f$ can be bounded by some polynomial, is a polynomial run-time.
\end{proof}

\noindent Lemma \ref{Bounding Levels} proves that the number of samples needed to estimate $c_t$ to a desired level of accuracy, $\varepsilon$, is independent of $m,n$, and $t$.\\
\newline\noindent
Note that, Chernoff's inequality gives a very large upper bound for $T_{\text{sample}}$. In our proof of Theorem \ref{thm:FPRAS}, we show that $O(T^3\log(T))$ total samples are required \textit{i.e}, $O(T^2\log(T))$ samples at each level. In \cite{jerrumbook} the author showed that only $O(T^2)$ total samples are needed to estimate $\hat{c}_t$ (there are $O(T)$ samples at each level). This means that Algorithm \ref{alg:MSAforCC} can be successfully implemented with much fewer iterations than the ones derived in this paper. Since our paper focuses on proving the existence of an FPRAS, we have used the bounds derived using Chernoff's inequality.\\

\subsection{Complexity of sampling}

By Theorem \ref{thm:FPRAS}, if sampling weak proper $c$-colourings of a Candy Crush grid can be done in polynomial time then, the Multilevel Splitting Algorithm \ref{alg:MSAforCC} is an FPRAS. In Section \ref{Moser-Tardos algorithm} we introduced $W$, the set of values of $(c,k)$ such that the Moser-Tardos algorithm will sample a $k$-stable $c$-colouring of an arbitrary sized Candy Crush grid in expected linear time. In this section, we derive a set $W'\subset \mathbb{N}^2$ such that $W'\subseteq W$. This lets us find some values of $c$ and $k$ such that the Moser-Tardos algorithm is guaranteed to run in randomised linear time.\\
\newline\noindent
In 2009, Moser and Tardos \cite{MoserTardos} proved that if the existence of a solution to a particular constraint satisfaction problem (CSP) is guaranteed to exist due to the \textit{Lov\'asz Local Lemma}, then the Moser-Tardos algorithm will find a satisfying solution in expected linear time. We re-state their theorem here in order to allow us to define terminology explicitly.

\begin{theorem}
Let $\mathcal{X}$ be a finite set of mutually independent random variables, and $\mathcal{B}$, called bad events, be a finite set of predicates determined by these variables. For $B\in \mathcal{B}$ let $\Gamma(B)$ be a proper subset of $\mathcal{B}$ satisfying that $B$ is independent from the collection of events $\mathcal{B}\setminus (\{B\}\cup\Gamma(B))$. If there exists an assignment  of real numbers $f:\mathcal{B}\to(0,1)$ such that $\forall B\in\mathcal{B}$
$$
 \mbox{\emph{Pr}}(B)\leq f(B)\prod_{A\in\Gamma(B)}(1-f(A)),
$$

\noindent Then there exists an assignment of values to the variables $\mathcal{X}$ such that none of the bad events $\mathcal{B}$ are true. Moreover, Algorithm \ref{alg:Moser-Tardos} resamples an event $B\in\mathcal{B}$ at most an expected $f(B)/(1-f(B))$ times before it finds such an assignment. Thus the expected number of resampling steps is at most $\sum\limits_{B\in\mathcal{B}} f(B)/(1-f(B))$.
\end{theorem}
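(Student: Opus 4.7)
The plan is to follow the witness-tree method of Moser and Tardos. First I would fix an arbitrary execution of Algorithm~\ref{alg:Moser-Tardos} and record its \emph{log} $C = B_1, B_2, \ldots$, the ordered list of bad events resampled at each step. For a time $t$ at which $B = B_t$ is resampled, I would build a rooted labeled tree $\tau_C(t)$ by placing $B_t$ at the root and then walking backward through $B_{t-1}, B_{t-2}, \ldots, B_1$; when processing $B_i$, attach a new node labeled $B_i$ as a child of the \emph{deepest} node already in the tree whose label lies in $\{B_i\}\cup\Gamma(B_i)$, and discard $B_i$ if no such node exists. The resulting tree is \emph{proper}: the children of any node labeled $A$ carry distinct labels drawn from $\{A\}\cup\Gamma(A)$.

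The first key step is the \emph{injection lemma}: distinct time steps $t\neq t'$ produce distinct witness trees, so the expected number of times $B$ is resampled is bounded above by the sum, over all proper trees $\tau$ rooted at a node labeled $B$, of $\text{Pr}[\tau \text{ appears in the log}]$. The second key step is the \emph{probabilistic bound}: each node $v$ of $\tau$ corresponds to a resampling that uses a fresh independent block of the variables in $\mathcal{X}$, and since the randomness blocks associated with distinct nodes are disjoint, one obtains
$$
\text{Pr}\bigl[\tau \text{ appears in the log}\bigr]\;\leq\;\prod_{v\in\tau}\text{Pr}(\mathrm{label}(v)).
$$

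To control the resulting sum I would dominate it by a Galton--Watson branching process: start with a root labeled $B$ and let each node labeled $A$ spawn, independently for each $X\in\{A\}\cup\Gamma(A)$, a child labeled $X$ with probability $f(X)$ and no child with probability $1-f(X)$. Applying the LLL hypothesis
$$
\text{Pr}(A)\;\leq\;f(A)\prod_{X\in\Gamma(A)}(1-f(X))
$$
to each factor of $\prod_{v\in\tau}\text{Pr}(\mathrm{label}(v))$ and rearranging produces a per-tree bound in terms of the branching-process weight of $\tau$; summing over $\tau$ and using that those weights sum to at most one yields the claimed bound $f(B)/(1-f(B))$ on the expected number of resamplings of $B$. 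Summing over $B\in\mathcal{B}$ gives a finite expected total number of resamplings, so the algorithm terminates almost surely at a configuration in which no bad event holds, which also establishes existence of a satisfying assignment.

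The main obstacle is the injection lemma. Since $\tau_C(t)$ records only labels and not time indices, recovering $t$ from $\tau_C(t)$ requires a careful \emph{replay} argument: one traverses the tree in a canonical order determined by the tree structure and shows that at each node $v$ the variables on which $\mathrm{label}(v)$ depends must be in a state that makes the event hold, because any competing resampling overlapping those variables would, by the attachment rule, have been placed as a descendant of $v$ rather than detached from it. Once injection is secured, the Galton--Watson domination is a clean but essential deployment of the LLL hypothesis.
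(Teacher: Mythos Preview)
Your proposal correctly sketches the witness-tree argument of Moser and Tardos, and this is precisely what the paper relies on: the paper does not supply its own proof of this theorem but simply states that a proof can be found in \cite{MoserTardos}. One small remark: the ``replay'' you describe under the injection lemma is really the coupling argument behind the bound $\text{Pr}[\tau\text{ appears}]\le\prod_{v}\text{Pr}(\mathrm{label}(v))$; injectivity itself follows more simply by noting that if $t<t'$ both resample $B$, then step $t$ is attached somewhere in $\tau_C(t')$, forcing $\tau_C(t')$ to have strictly more nodes than $\tau_C(t)$.
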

\noindent A proof of this theorem can be found in \cite{MoserTardos}.\\

\begin{theorem}
\label{candklemma}
\noindent If values of $c$ and $k$ are selected such that 
$$\frac{1}{c}\leq \Big(\frac{1}{k^2+2k-1}\Big)^{\frac{1}{k-1}}\Big(1-\frac{1}{k^2+2k-1}\Big)^{\frac{2k-2+k^2}{k-1}}$$
holds, then the existence of a stable Candy-Crush grid is guaranteed through the Lov\'asz Local Lemma.
\end{theorem}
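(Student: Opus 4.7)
The plan is to apply the symmetric version of the Lov\'asz Local Lemma (LLL) that was stated just before the theorem. I would take $\mathcal{X}$ to be the collection of $|V|$ mutually independent uniform random variables assigning each vertex a colour in $\{1,\ldots,c\}$, and let the bad events be $\mathcal{B}=\{B_e : e\in E\}$, where $B_e$ is the event ``hyperedge $e$ is monochromatic''. A direct count gives $\text{Pr}(B_e) = c\cdot(1/c)^k = 1/c^{k-1}$, independently of $e$, and since $B_e$ depends only on the $k$ variables sitting on the vertices of $e$, a legitimate dependency set $\Gamma(B_e)$ is the collection of hyperedges $e'\neq e$ sharing at least one vertex with $e$.

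Next I would carefully bound $|\Gamma(B_e)|$. For a horizontal hyperedge $e=\{(i,j),\ldots,(i,j+k-1)\}$, the hyperedges intersecting $e$ split into: (i) other horizontal hyperedges on row $i$ whose starting column lies in $\{j-k+1,\ldots,j+k-1\}\setminus\{j\}$, of which there are at most $2k-2$; and (ii) vertical hyperedges hitting at least one vertex of $e$, of which there are at most $k$ through each of the $k$ vertices of $e$, giving at most $k^2$. These two classes are disjoint, so $|\Gamma(B_e)|\leq k^2+2k-2$, and the same bound holds for vertical hyperedges by symmetry. I will therefore set $d=k^2+2k-2$ as a uniform bound on the degree of the dependency graph.

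I then apply the LLL with the constant weight $f(B)=x$ for all $B\in\mathcal{B}$, reducing the hypothesis to the single inequality $1/c^{k-1}\leq x(1-x)^d$. Elementary calculus shows that the right-hand side is maximised at $x=1/(d+1)=1/(k^2+2k-1)$, yielding
$$
\frac{1}{c^{k-1}} \;\leq\; \frac{1}{k^2+2k-1}\left(1-\frac{1}{k^2+2k-1}\right)^{k^2+2k-2}.
$$
Taking $(k-1)$-th roots of both sides and writing the exponent $k^2+2k-2$ as $2k-2+k^2$ reproduces exactly the inequality in the theorem statement; the LLL then guarantees a positive-probability (hence existent) assignment with no monochromatic hyperedge, i.e.\ a weak proper $c$-colouring of the Candy Crush grid.

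The only real obstacle is the combinatorial bookkeeping in the second step: one must verify that no intersecting hyperedge is double-counted across the horizontal/vertical split, that boundary effects only decrease the count, and that the resulting bound $d=k^2+2k-2$ is tight enough to produce the precise constant $k^2+2k-1$ appearing in the theorem. Everything else --- checking the LLL hypothesis, optimising over $x$, and the final rearrangement --- is routine algebra once that degree bound is in hand.
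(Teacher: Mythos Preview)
Your proposal is correct and follows essentially the same route as the paper: same choice of variables and bad events, the same count $|\Gamma(B_e)|\le k^2+2k-2$ split into $2k-2$ parallel and $k^2$ perpendicular intersecting hyperedges, the same constant weight $f(B)=x$, and the same optimisation at $x=1/(k^2+2k-1)$. The only cosmetic difference is that you phrase the bookkeeping a bit more carefully (explicitly noting boundary effects only help and that the horizontal/vertical classes are disjoint), whereas the paper states the bound more tersely.
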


\begin{proof}
We define $\mathcal X$ as the set $\mathcal X=\{\mathcal X_1,\mathcal X_2,\cdots,\mathcal X_T\}$, where $\mathcal X_i$ is the colour of the $i$th vertex. We define each \textit{bad event} $B_i\in\mathcal{B}$ as the event that the $i$th hyperedge is monochromatic. Using these definitions, we can place bounds on how many events in $\mathcal{B}$ depend on each other. Two events $B_i$ and $B_j$ are dependent if the $i$th and $j$th hyperedge intersect. For each hyperedge $e\in E$ in the Candy-Crush grid, there are at most $2k-2$ hyperedges that intersect $e$ and are parallel to it, where ``parallel" means that both hyperedges are either both horizontal or both vertical. Furthermore, for each hyperedge $e\in E$, there are exactly $k$ vertices in $e$ and there are at most $k$ hyperedges that are perpendicular to $e$ and intersect it. Thus there is a maximum of $2k-2+k^2$ possible hyperedges that can intersect $e$.\\
\newline\noindent
Now that we have a tight upper bound for the number of events in $\mathcal{B}$ that intersect with each other, we can apply the Lov\'asz Local Lemma to prove the existence of $k$-stable $c$-coloured Candy-Crush grids. The Lov\'asz Local Lemma states that a stable Candy-Crush grid exists when, for $B\in\mathcal{B}$,

$$
\text{Pr}(B)\leq f(B)\prod_{A\in\Gamma(B)}(1-f(A)).
$$ 

\noindent For simplicity, we assume $f(B)$ is the same for all $B\in \mathcal{B}$, let us denote this by $f(B)=x$, where $0\leq x\leq1$. Also we assume that $X_i\in \mathcal{X}$ is uniformly distributed in $\{1,\cdots,c\}$. Therefore, we have the inequalities

\begin{equation} \label{LLLeq}
\text{Pr}(B)=1/c^{k-1}\leq f(B)\prod_{A\in\Gamma(B)}(1-f(A))\leq x(1-x)^{2(k-1)+k^2}.
\end{equation}

\noindent We can find an upper bound the to number of colours $c$ that guarantees the existence of a stable Candy-Crush grid by finding the maximum of $x(1-x)^{2(k-1)+k^2}$ for $x\in[0,1]$. Taking the derivative with respect to $x$, we have
$$
\frac{\partial}{\partial x} x(1-x)^{2(k-1)+k^2}=(1-x)^{2k+k^2-3}(1-x(2k-1+k^2)).
$$

\noindent This partial derivative is equal to zero when $x=1$ or $(1-x(2k-1+k^2))=0$. If $x=1$, we have that $1/c^{k-1}\leq 0$, which would imply that no stable Candy-Crush grids exist, thus $x=1$ corresponds to a local minimum. Since $x(1-x)^{2(k-1)+k^2}=0$ when $x=1$ and $x=0$, and since $\frac{\partial}{\partial x} x(1-x)^{2(k-1)+k^2}>0$ for $x=0$, the function $x(1-x)^{2(k-1)+k^2}$ must have a local maximum in the interval $[0,1]$. The only possible value for this maximum is therefore $x=1/(k^2+2k-1)$. Substituting this into (\ref{LLLeq}) we have
\begin{equation}\nonumber
\begin{split}
\frac{1}{c^{k-1}} &\leq \frac{1}{k^2+2k-1}\Big(1-\frac{1}{k^2+2k-1}\Big)^{2k-2+k^2},\\
\end{split}
\end{equation}

\noindent or equivalently,

\begin{equation}
\label{eqnconditions}
\frac{1}{c}\leq \Big(\frac{1}{k^2+2k-1}\Big)^{\frac{1}{k-1}}\Big(1-\frac{1}{k^2+2k-1}\Big)^{\frac{2k-2+k^2}{k-1}}.\tag{\ref{candklemma}}
\end{equation}

\end{proof}

\noindent Note that, due to the results in \cite{MoserTardos}, since a stable Candy Crush grid is guaranteed to exist due to the Lov\'asz Local Lemma the Moser-Tardos algorithm is guaranteed to find a stable grid in expected linear time.\\
\newline\noindent
It follows from Lemma \ref{candklemma} that in the case where $k=3$,  (\ref{eqnconditions}) holds when $c> 6$. Candy-Crush is typically played with $k=3$ and $c\leq6$, so unfortunately our theoretical results do not apply to the game itself. A plot of values of $c$ which satisfy (\ref{eqnconditions}) plotted against $k$ is shown in the white region in Figure \ref{fig:inequality3}. The red region in Figure \ref{fig:inequality3} represents the values of $c$ and $k$ such that we cannot prove that Algorithm \ref{alg:Moser-Tardos} will find a weak $c$-colouring of $H$ in randomised polynomial time. However, we implemented Algorithm \ref{alg:MSAforCC} using Matlab and found that the algorithm appears to run in randomised polynomial time for all values of $c$ when $k\geq3$. This agrees with the observations in Catarata \textit{et al.} \cite{Catarata} in which he found that the Moser-Tardos algorithm can find solutions to a large number of constraint satisfaction problems even though the Lov\'asz Local Lemma could not guarantee the existence of a solution, \textit{i.e.}, $|W'|<|W|$.\\

\noindent Note that there are other methods of defining variables and bad events such that Moser-Tardos will still find a stable Candy-Crush grid. These other formulations of the Candy-Crush problem are non-trivially different, each implying different things about the algorithm's efficiency and output distribution.\\

\begin{figure}
\centering
\begin{tikzpicture}
  \node (img1)  {\includegraphics[trim={2cm 8cm 0 7cm},clip,width=0.6\textwidth]{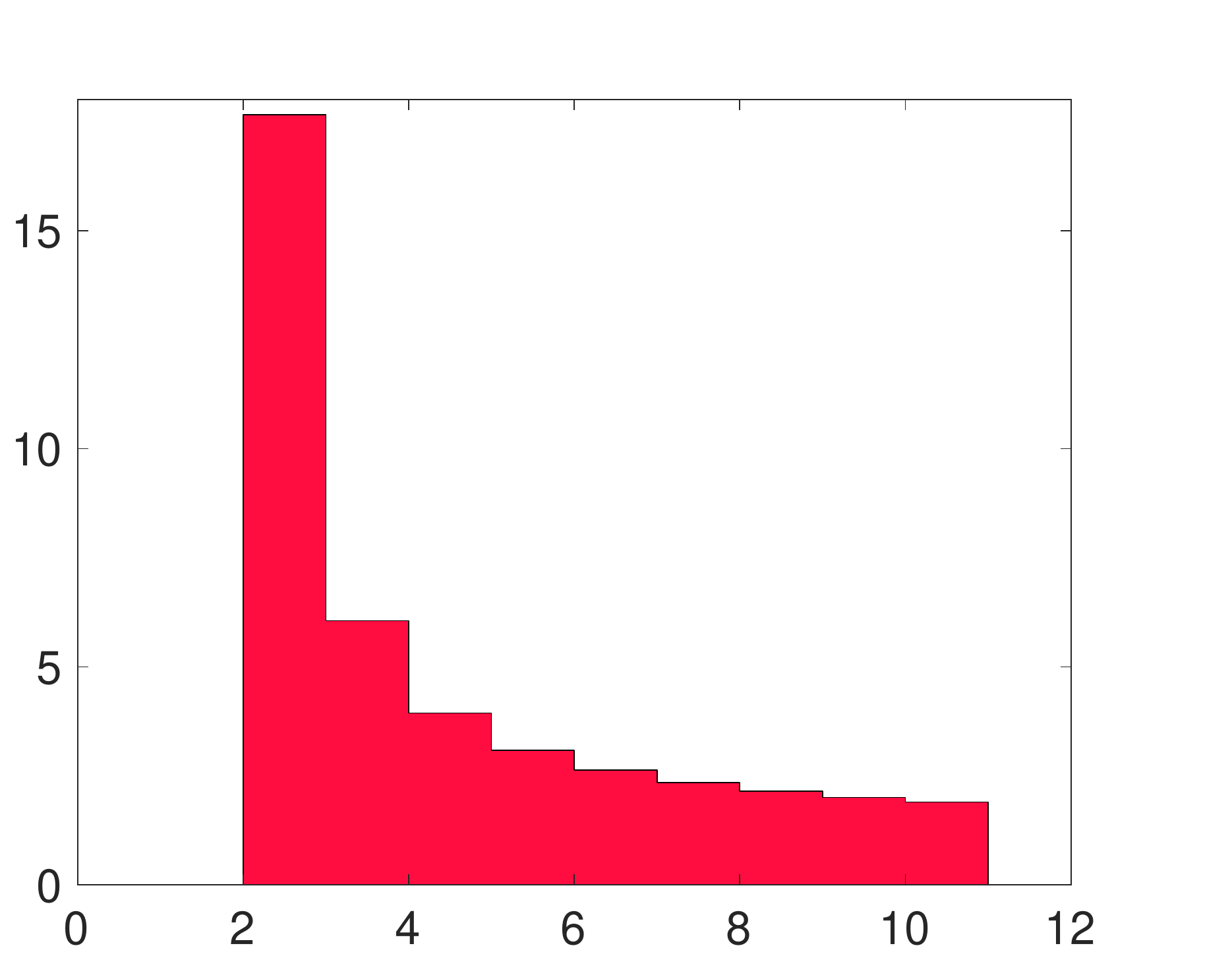}};
  \node[below=of img1, node distance=0cm, yshift=1cm,font=\color{black}] {size of each hyperedge $k$};
  \node[left=of img1, node distance=0cm, rotate=90, anchor=center,yshift=-0.7cm,font=\color{black}] {number of colours $c$};
\end{tikzpicture}

\caption{A plot of the values of values of $c$ and $k$ such that Algorithm \ref{alg:Moser-Tardos} is not guaranteed to find a stable Candy-Crush grid in randomised polynomial time (shown in red). All other values of $c$ and $k$ guarantee that the Moser-Tardos algorithm will find a weak proper $c$-colouring of a $k$-uniform Candy Crush grid in expected polynomial time.}
\label{fig:inequality3}
\end{figure}

\label{complexity}

\subsection{Uniformity of the Moser-Tardos algorithm applied to Candy-Crush}
When the Moser-Tardos algorithm is finished, there is no guarantee that the weak proper $c$-colouring of a Candy Crush grid will be sampled from the uniform distribution. This is a problem as the Multilevel Splitting Algorithm \ref{alg:MSAforCC} will only provide unbiased estimates if the samples from $Y_t$ are drawn uniformly at random. Guo \textit{et al.} \cite{UniformMT} proved that when a constraint satisfaction problem is \textit{extremal} then the output of the Moser-Tardos algorithm is uniformly distributed. A constraint satisfaction problem is said to be extremal if and only if every pair of bad events $B_i,B_j\in \mathcal{B}$ are either mutually exclusive or independent.\\
\newline\noindent
The problem of finding a weak proper $c$-colouring of a hypergraph $H=(V,E)$ is extremal only if all hyperedges in $E$ are disjoint. Since the hyperedges in a nontrivial Candy Crush grid are not disjoint, the result of Guo \textit{et al.} \cite{UniformMT} does not apply here. In this section, we prove that the output of the Moser-Tardos algorithm is uniformly distributed when applied to finding weak proper $c$-colourings of Candy Crush grids.\\

\label{uniformity proof}
\begin{theorem}
Given a uniformly distributed $c$-coloured Candy-Crush grid as an input, the output of the Moser-Tardos algorithm is uniformly distributed across stable Candy crush grids.
\end{theorem}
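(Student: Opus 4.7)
The plan is to express the output probability as a sum over trajectories of the Moser--Tardos algorithm and to show the sum is the same for every stable coloring. A length-$\tau$ trajectory is a sequence $X_0\to X_1\to\cdots\to X_\tau=\phi$ in which each $X_i$ for $i<\tau$ is unstable, $\phi$ is stable, and each transition resamples the lex-first monochromatic hyperedge of the current state uniformly. Each such trajectory has probability $c^{-|V|-k\tau}$, since the initial coloring is uniform on $\{1,\ldots,c\}^{|V|}$ and each of the $\tau$ resamplings is an independent uniform draw on $\{1,\ldots,c\}^{e_i}$. Writing $N_\tau(\phi)$ for the number of length-$\tau$ trajectories ending at $\phi$,
\[
\Pr[\text{output}=\phi]=\sum_{\tau\ge 0}c^{-|V|-k\tau}\,N_\tau(\phi),
\]
so uniformity over stable colorings reduces to showing that $N_\tau(\phi)$ depends only on the grid and $\tau$, not on the particular stable $\phi$.

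I would prove this by induction on $\tau$. The base case $\tau=0$ gives $N_0(\phi)=1$. For $\tau\ge 1$, use the recursion $N_\tau(\phi)=\sum_{\psi}N'_{\tau-1}(\psi)$ summed over 1-step predecessors $\psi$ of $\phi$, where $N'$ counts length-$(\tau-1)$ trajectories ending at $\psi$ with every intermediate state unstable. A 1-step predecessor of $\phi$ is obtained by picking a hyperedge $e$ and a color $\alpha$, setting $\psi|_e=\alpha\mathbf{1}_e$ and $\psi|_{V\setminus e}=\phi|_{V\setminus e}$, and requiring $e$ to be the lex-first monochromatic hyperedge of $\psi$. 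Since $\phi$ is stable, any lex-earlier hyperedge not overlapping $e$ is automatically non-monochromatic in $\psi$, so the constraint reduces to counting $\alpha$ that avoid making any lex-earlier overlapping hyperedge $e'$ monochromatic; each such $e'$ forbids exactly one value of $\alpha$ when $\phi$ happens to be monochromatic on $e'\setminus e$, and zero otherwise. A careful case analysis using the Candy Crush grid geometry (hyperedges are contiguous $k$-tuples along rows or columns) should show that the total count of 1-step predecessors of a stable $\phi$ is grid-determined and $\phi$-independent.

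The main obstacle is to propagate this invariance through the recursion: the intermediate unstable states $\psi$ can have genuinely $\psi$-dependent predecessor counts, so the naive induction does not close directly. To handle this I would group trajectories by ``type'' based on the sequence of hyperedges resampled together with the monochromatic patterns encountered, then use the natural $S_c$-equivariance of the algorithm---applying any $\sigma\in S_c$ uniformly to $X_0$ and each $R_i$ gives a probability-preserving bijection on trajectories that sends outputs $\phi\mapsto\sigma\circ\phi$---to identify and coalesce equivalent types, and argue that the aggregate contribution of each type to $N_\tau(\phi)$ is $\phi$-independent. The hardest step will be showing that this bookkeeping genuinely closes the induction at every depth of the recursion; this is precisely where the lex-ordering of vertices and the row/column structure of Candy Crush hyperedges must do the heavy lifting, since for more general hypergraphs the weak-coloring problem is not extremal and the argument is not expected to go through.
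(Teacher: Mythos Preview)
Your trajectory-counting framework is a reasonable starting point, and the formula $\Pr[\text{output}=\phi]=\sum_{\tau\ge0}c^{-|V|-k\tau}N_\tau(\phi)$ is correct for the lex-first-hyperedge version of the algorithm. However, the argument has a genuine gap that you yourself flag but do not close.

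The $S_c$-equivariance you invoke only yields $N_\tau(\phi)=N_\tau(\sigma\circ\phi)$ for every $\sigma\in S_c$, i.e., uniformity \emph{within} each color-permutation orbit of stable colorings. It says nothing about equality \emph{across} orbits, and there are many orbits. Your ``type grouping'' proposal is left entirely unspecified, and you concede that closing the induction at every depth is ``the hardest step''; as written, the argument stops exactly where the real content would have to begin. Even the 1-step claim---that the total predecessor count of a stable $\phi$ is $\phi$-independent after a ``careful case analysis''---is not obvious in two dimensions: for a fixed hyperedge $e$, the number of admissible monochromatic colors $\alpha$ depends on whether certain pairs of $\phi$-values adjacent to $e$ coincide (for instance whether $\phi(i-2,j)=\phi(i-1,j)$, which would forbid that common value as $\alpha$ via a perpendicular lex-earlier hyperedge), so per-edge counts are genuinely $\phi$-dependent, and you give no argument that the aggregate over all $e$ cancels these dependencies.

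The paper avoids trajectory counting altogether. It reformulates the bad events so that $A_j$ is the event ``hyperedge $j$ is the lexicographically first monochromatic hyperedge.'' These events are mutually exclusive, hence extremal in the sense of Guo--Jerrum--Liu, so Moser--Tardos on \emph{that} formulation provably gives uniform output. In that formulation each resampling step refreshes not only the offending hyperedge but also every lex-lower vertex; the paper then argues that, since those lower vertices are already a uniformly distributed stable sub-configuration, omitting their resampling does not change the output law---and omitting it recovers precisely the original Moser--Tardos step. This coupling-to-an-extremal-instance idea is the missing ingredient; it replaces your open-ended induction with a one-line reduction to a known theorem.
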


\begin{proof}
\label{UniformProof}

The critical idea behind this proof is that we resample a larger set of Candies rather than just the candies in a monochromatic hyperedge. We show that this choice of sampling means that the result of Guo \textit{et al.} \cite{UniformMT} will hold, and that the Moser-Tardos algorithm will give uniformly distributed outputs. We then show that resampling the vertices outside of the hyperedge does not affect the distribution of the colours of vertices outside of the hyperedge. This means that if we did not resample any vertices outside of the hyperedge, then the distribution of the Moser-Tardos's output would not change.\\
\newline\noindent 
Here, the input grid is a subset of an $m\times n$ grid, where $m$ and $n$ are natural numbers. We can show that the output of the Moser-Tardos algorithm is uniformly distributed by imposing a lexicographic ordering on the $k$-uniform hyperedges in the grid. In the typical framework of the Moser-Tardos algorithm, the variable $X_i$, for $i \in \{1,2,\cdots ,|V|\}$, is the colour of the $i$th vertex and the bad event $B_j$, for $j \in\{1,2,\cdots,|E|\}$, is the event that the $j$th hyperedge is monochromatic. The typical Moser-Tardos algorithm works by randomly selecting a monochromatic hyperedge and, uniformly and independently, re-sampling all the colours of the vertices in this hyperedge.\\
\newline\noindent
Let us instead consider a different variation of the same problem. In this version, we still define the variable $X_i$ to be the colour of the $i$th vertex. However, we define each bad event $A_j$ to be the event corresponding to the hyperedge indexed by $j$ in the lexicographic ordering is the lexicographically first monochromatic hyperedge. By ``lexicographically first", we mean the monochromatic hyperedge whose lexicographically lowest vertex is lexicographically lower than the lowest vertices in any other monochromatic 3-mer. For brevity we will omit the word ''lexicographically". In the event that two monochromatic hyperedges have the same vertex as their lowest, \textit{i.e.}, both a vertical and horizontal hyperedge, the horizontal hyperedge is considered to be the lowest. This means that each bad event $A_j$ is now a function of the random variables $X_i$ that are included the monochromatic hyperedge, as well as the colour of every other vertex that is lexicographically lower than this hyperedge. \\
\newline\noindent
This choice of indexing the hyperedges implies three things. First, the degree of the dependency graph is now unbounded and the Lov\'asz Local Lemma can no longer be made to hold. This unfortunately means that, in this setup, we would no longer be able to claim that the algorithm runs in expected linear time, but we are not suggesting this as an actual algorithm. Second, the Moser-Tardos algorithm will still output a stable Candy-Crush grid. This is because if there are no lexicographically first monochromatic hyperedges, there are no monochromatic hyperedges. Third, the events $A_j$ are now extremal, because there cannot be two monochromatic hyperedges that are both lexicographically first. This implies that when the Moser-Tardos algorithm is applied to this problem with an input generated uniformly at random, the output will be distributed uniformly at random from the set of satisfying solutions.\\
\newline\noindent
A step of the Moser-Tardos algorithm works by going through every hyperedge in lexicographic order until a monochromatic hyperedge is found. Once this monochromatic hyperedge is found, the colours of the vertices of the monochromatic hyperedge and every lower-ordered vertex are resampled uniformly independently. After each resampling, one of two things can happen:
\begin{enumerate}
\item There are no monochromatic hyperedges in the resampled vertices, and the first monochromatic hyperedge is higher than the one found in the previous step. 
\item There is at least one monochromatic hyperedge that appears after the resampling. This means that the first monochromatic hyperedge is now lower than the one found in the previous step.
\end{enumerate}

\noindent Thus, during the run of the algorithm the position of the first monochromatic hyperedge is changing randomly, either moving up or down. Let us denote the position of the first monochromatic hyperedge before each resampling by $y$. By the position of the hyperedge, we mean the location of its lowest vertex. The algorithm finishes when this position cannot go any higher, \textit{i.e.}, the first monochromatic 3-mer has ``left the screen" or when $y$ is larger than $nm$, and we have a $k$-stable $c$-colouring. Let us denote the set of all the vertices lower than $y$ by $C_y$. Since all the vertices in $C_y$ are uniformly sampled from the set of stable configurations, if we do not resample any of the vertices in $C_y$ we will not change the distribution of the output of the algorithm.\\
\newline\noindent
So we can instead consider a new version of the Moser-Tardos algorithm in which we do not resample the vertices in $C_y$. This means we only resample the vertices that comprise the monochromatic hyperedge, and still achieve a uniform sampling of stable Candy-Crush grids at the end of the algorithm's run. However, this algorithm is simply the original Moser-Tardos algorithm in which we resample only the vertices in monochromatic hyperedges. Therefore, for the original version of the Moser-Tardos algorithm, the output of the algorithm is uniformly distributed from the set of stable Candy-Crush grids.
\end{proof}

\section{Application to general hypergraphs}
Here we, explore the use of Algorithm \ref{alg:MSAforCC} in approximating the weak chromatic polynomials of general hypergraphs.\\
\newline\noindent
We showed that Algorithm \ref{alg:MSAforCC} is an FPRAS for approximating the weak chromatic polynomial of Candy Crush grids when (\ref{eqnconditions}) is satisfied. There are many features of the Candy Crush that are superfluous in proving that Algorithm \ref{alg:MSAforCC} is an FPRAS. For instance, in Section \ref{uniformity proof} the structure of the hypergraph is almost never mentioned. The main requirement for the proof to work is that one can establish a lexicographic order on the hyperedges and vertices of the hypergraph. In Section \ref{Bounding Levels}, we showed that the conditional probabilities $c_t$ are bounded by using the fact that the degree of each vertex is linear in~$k$.\\
\newline\noindent
It is reasonable to hypothesise that Algorithm \ref{alg:MSAforCC} works as an FPRAS for other hypergraphs. These hypergraphs would need to exhibit the same features as the Candy Crush grids, such as low-degree vertices, hyperedges that are bounded in size, and the potential for weak proper $c$-colourings to exist due to the Lov\'asz Local Lemma for sufficiently large~$c$.\\
\newline\noindent
The highly structured nature of Candy Crush grids makes them an ideal structure to which we can apply the Lov\'asz Local Lemma. For general hypergraphs, proving that the Moser-Tardos algorithm runs in expected linear time may be quite difficult, especially for more complex hypergraphs or families of hypergraphs.\\
\newline\noindent
We believe that Algorithm \ref{alg:MSAforCC} would have applications in estimating the weak chromatic polynomials of a wide variety of hypergraphs beyond Candy Crush grids. Determining the types of graphs for which Algorithm \ref{alg:MSAforCC} is efficient remains an open problem.

\section{Conclusion}
In conclusion, we have constructed an FPRAS to estimate the state space of the game Candy Crush. Algorithm \ref{alg:MSAforCC} is one such FPRAS which works when Inequality (\ref{eqnconditions}) holds. When (\ref{eqnconditions}) does not hold, the FPRAS may run either in expected polynomial time or expected exponential time. Determining what conditions are necessary and sufficient for the algorithm to run in expected polynomial time is still an open problem. Empirical results suggest that $|W'|<|W|$. Thus, in the future, we plan to seek tighter bounds.

\subsection*{Acknowledgements}

The second author would like to also acknowledge the support of her ARC DP180103106.


\bibliographystyle{unsrtnat}
\bibliography{references}

\end{document}